\documentclass[11pt]{amsart}
\usepackage[T1]{fontenc}
\usepackage{lmodern}
\usepackage[margin=1in]{geometry}
\usepackage{amsmath,amssymb,mathtools}
\usepackage{microtype}
\usepackage[colorlinks=true,linkcolor=blue,citecolor=blue,urlcolor=blue]{hyperref}

\newtheorem{theorem}{Theorem}
\newtheorem{lemma}[theorem]{Lemma}
\newtheorem{proposition}[theorem]{Proposition}
\theoremstyle{remark}

\numberwithin{equation}{section}
\newcommand{\R}{\mathbb R}
\newcommand{\Z}{\mathbb Z}
\newcommand{\supp}{\operatorname{supp}}
\newcommand{\lip}{\text{Lip}}
\newcommand{\dist}{\operatorname{dist}}
\newcommand{\dd}{\,d}
\newcommand{\norm}[1]{\left\|#1\right\|_2}
\newcommand{\ip}[2]{\left\langle #1,#2\right\rangle}

\title{Ergodicity of the geodesic flow\\
on surfaces of nonpositive curvature}
\author{Federico Rodriguez Hertz}
\author{Jana Rodriguez Hertz}
\author{Weisheng Wu}
\date{}
\thanks{FRH has been partially supported by NSF grant DMS-2453688, WW is supported by the National Key R\&D Program of China under grant 2022YFA1007800.}

\begin{document}
\begin{abstract}
We give a full proof that the geodesic flow of a closed surface of
genus at least two with a \(C^{3+\mathrm{Lip}}\) metric of nonpositive
curvature is ergodic with respect to Liouville measure. In a forthcoming paper, we will present
the corresponding result for smooth rank-one manifolds in every dimension.
\end{abstract}
\maketitle

\section{Introduction}

This paper provides a full proof of the
following fact. Here \(C^{3+\mathrm{Lip}}\) means that the third
derivatives of the metric are locally Lipschitz.

\begin{theorem}\label{thm:surface}
Let \(M\) be a closed, connected, oriented surface of genus at least two,
equipped with a \(C^{3+\mathrm{Lip}}\) Riemannian metric of nonpositive
Gaussian curvature $K$. Its geodesic flow is ergodic with respect to
Liouville measure.
\end{theorem}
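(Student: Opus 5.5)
We will run a Hopf argument on the unit tangent bundle \(SM\), with care at the non-uniformly hyperbolic part of phase space.

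\emph{Setup.} Let \(\mathcal R\subset SM\) be the regular set. It consists of the vectors \(v\) whose geodesic passes through a point of negative curvature, in both forward and backward time. Equivalently, the stable and unstable Jacobi fields along the geodesic of \(v\) are distinct. Since the genus is at least two, Gauss--Bonnet gives \(\int_M K<0\), so \(\{K<0\}\) is a nonempty open set. Poincaré recurrence then shows that Liouville-almost every \(v\) returns infinitely often to a set of vectors based where \(K<0\). Along such orbits the Riccati solutions \(u^\pm\) (the second fundamental forms of the horocycles) satisfy \(u^+-u^->0\). Consequently the Lyapunov exponent \(\chi(v)=\lim\frac1t\int_0^t -u^+(\phi_s v)\,ds\) is nonzero almost everywhere. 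This uses Birkhoff's theorem together with the fact that \(u^+\le 0\), with strict inequality somewhere along a positive-measure set of the orbit.

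\emph{Pesin theory.} The \(C^{3+\mathrm{Lip}}\) metric makes the geodesic flow \(C^{2+\mathrm{Lip}}\), which is enough for Pesin theory. Pesin's results give local stable and unstable manifolds on the regular set, and these coincide with the stable and unstable horocycle leaves \(W^{s},W^u\). The horocycle foliations are continuous on all of \(SM\) and only \(C^1\) leafwise. What we need is absolute continuity of the foliations restricted to Pesin sets \(\Lambda_\ell\), which holds by the standard Pesin–Katok construction since the flow is \(C^{1+\alpha}\). Alternatively, one can restrict to the Lebesgue-positive set where \(u^+-u^-\ge 1/\ell\).

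\emph{Hopf argument.} Take \(f\) continuous and let \(f^\pm\) be its Birkhoff averages. Then \(f^+\) is constant on stable leaves and \(f^-\) on unstable leaves, and \(f^+=f^-\) almost everywhere. Using absolute continuity of the weak-stable and unstable foliations on \(\Lambda_\ell\), the Hopf chain argument shows that \(f^+\) is essentially constant on a neighbourhood of each Lebesgue density point of \(\Lambda_\ell\). This gives local ergodicity, namely that almost every regular vector has an open neighbourhood contained mod \(0\) in a single ergodic component. Next, \(\mathcal R\) is open, dense and invariant. The set where all exponents vanish has measure zero by the previous step. It remains to show that \(\mathcal R\) is connected. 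We argue this using the continuity of the horocycle foliations and the fact that \(W^{u}\)-leaves of regular vectors are dense in \(SM\) (Eberlein's minimality of horocycle foliations for rank-one surfaces). From connectedness, the open ergodic components must all coincide, so Liouville measure is ergodic.

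\emph{Main obstacle.} The hard step is the passage from local ergodicity on \(\Lambda_\ell\) to open ergodic components. The difficulty is that local stable manifolds have size that degenerates near the singular set, where \(u^+=u^-\). The plan is to avoid uniform Pesin sizes altogether. Instead we would use that the horocycle leaves are globally defined, continuous, and genuinely contracting in the length metric whenever the orbit passes \(\{K<0\}\). With this, a Hopf chain through a Pesin point can be built using full horocycles of uniform size, and only the absolute continuity needs Pesin control. The most delicate technical point is verifying absolute continuity of the holonomy between transversals for these global horocycle foliations with only \(C^{3+\mathrm{Lip}}\) regularity. We would attempt this through Jacobian estimates from the Riccati equation.
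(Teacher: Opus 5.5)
\textbf{The genuine gap is in your first step.} Poincar\'e recurrence only says that almost every vector whose orbit meets $A=\pi^{-1}\{K<0\}$ returns to $A$ infinitely often in both time directions. It says nothing about the closed invariant set $S=\{v: K(\pi g^s v)=0 \text{ for all } s\in\R\}$ of vectors whose geodesics never enter $\{K<0\}$. Gauss--Bonnet only gives $\{K<0\}\neq\varnothing$; it does not make $S$ small. Because $S$ is invariant and its complement is a nonempty open set, ergodicity itself forces $m(S)=0$. So $m(S)=0$ is a necessary consequence of the theorem. Your claims that $\chi\neq0$ almost everywhere, and that the zero-exponent set is null \emph{by the previous step}, therefore assume exactly what must be proved. (Also, $u^+\neq u^-$ along a geodesic is equivalent to the geodesic meeting $\{K<0\}$ at \emph{some} time, not in both directions.)

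For the same reason, the obstacle you single out is not the real one. Once $m(S)=0$ is known, the rest is Pesin's classical theorem on the regular set, which the paper simply cites. That theorem already handles the Hopf--Pesin argument on $SM\setminus S$: non-uniform sizes of local manifolds, absolute continuity, and the passage from open ergodic components to a single component.

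All of the paper's work goes into proving $m(S)=0$, and this needs an idea absent from your plan:
\begin{itemize}
\item Assuming $m(S)>0$, the paper expands $f=\mathbf 1_S$ in fibrewise Fourier modes $f_k$. Invariance gives $\eta_+f_{2j}+\eta_-f_{2j+2}=0$.
\item The Guillemin--Kazhdan identity with $K\le0$ gives $\|\eta_+u\|\ge\|\eta_-u\|$ in degrees $k\ge0$. This propagates a lower bound on $\|\eta_+P_tf_0\|$, which comes from the nonconstancy of $f_0$, up the chain to degree $2j$.
\item To make this rigorous, the paper regularizes by the heat semigroup of the connection Laplacian. It bounds the commutator $[\eta_\pm,P_t]$ on $f_k$ by $(1+|k|)t^{3/2}\|f_k\|$, using finite propagation speed and the second-order vanishing $|K|\lesssim d_F^2$, $|\nabla K|\lesssim d_F$ on $F=\supp f_0$. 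This is where the $C^{3+\mathrm{Lip}}$ hypothesis enters, not in horocycle holonomies.
\item Taking $t=a/j$ yields $\|f_{2j}\|^2\gtrsim 1/j$, contradicting Parseval.
\end{itemize}
No refinement of the Hopf argument can replace this step, since that argument only sees orbits that actually visit negative curvature.
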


Write \(SM\) for the unit tangent bundle, \(\pi:SM\to M\) for its
projection, and \(g^s\) for the geodesic flow. Normalize area to
\(\bar a(M)=1\); Liouville probability measure is
\(dm=d\bar a\,d\theta/(2\pi)\).
Following Pesin's ergodicity theorem on the regular set
\cite{Pesin},
it suffices to prove that the singular set
\[
 S=\{v\in SM:K(\pi g^s v)=0\text{ for every }s\in\R\}
\]
has zero Liouville measure. Indeed, \(g^s\) restricted to \(SM\setminus S\)
is ergodic for the restricted Liouville measure.

We have also proved that the geodesic flow of every closed, connected,
smooth rank-one manifold of nonpositive sectional curvature and dimension
at least two is ergodic with respect to Liouville measure. Here rank one means that some geodesic admits no nonzero
parallel normal Jacobi field; see
\cite{BBE}.
The proof of this higher-dimensional result will be presented separately.

Throughout, \(\lesssim\) denotes an inequality up to a constant depending
only on the fixed metric. In estimates involving a Fourier degree \(k\)
or a heat time \(t\), this constant is independent of both unless
explicitly stated. All \(L^2\) norms are taken with respect to the
normalized measures above.

\section{Fourier modes and the connection Laplacian}

Assume, towards a contradiction, that \(m(S)>0\), and set \(f=\mathbf1_S\).
The set \(S\) is closed, invariant under \(g^s\), and invariant under
\(v\mapsto-v\). If \(X\) is the geodesic vector field, then
\begin{equation}\label{eq:invariant}
 Xf=0\quad\text{in distributions},\qquad f(x,-v)=f(x,v).
\end{equation}
Let \(V\) generate rotation in each oriented unit tangent circle.
For \(k\in\Z\), let \(\Omega_k\) be the closed subspace of \(L^2(SM)\)
on which \(V=ik\). In a local oriented orthonormal frame,
\[
 f_k(x,\theta)=\widehat f_k(x)e^{ik\theta},\qquad
 \widehat f_k(x)=\frac1{2\pi}\int_0^{2\pi}
               f(x,\theta)e^{-ik\theta}\dd\theta.
\]
These local expressions define global Fourier components. Reversal
invariance and Parseval's identity give
\begin{equation}\label{eq:parseval}
 f_k=0\quad(k\text{ odd}),\qquad
 \sum_{k\in\Z}\norm{f_k}^{\,2}=\norm f^{\,2}=m(S).
\end{equation}
The zeroth component
\[
 f_0(x)=\frac1{2\pi}\int_{S_xM}\mathbf1_S(x,v)\dd\theta
\]
is nonnegative, and \(|f_k(x,v)|\le f_0(x)\).
Consequently, with
\begin{equation}\label{eq:F}
 F=\supp(f_0\,d\bar a),\qquad d_F(x)=\dist(x,F),
\end{equation}
every \(f_k\), viewed as a section over \(M\), is supported in \(F\).
Since \(f_0=0\) on \(\{K<0\}\), continuity of \(K\) gives \(K|_F=0\).
The Gauss--Bonnet theorem and the genus assumption imply
\(\{K<0\}\ne\varnothing\). Thus \(f_0\) vanishes on a nonempty open
set, whereas \(\int_M f_0\,d\bar a=m(S)>0\); in particular, \(f_0\)
is not constant.

The metric regularity implies \(K\in C^{1+\lip}\). Every point of \(F\)
is a maximum of \(K\), so \(\nabla K|_F=0\). Taylor's theorem with
Lipschitz first derivative therefore yields
\begin{equation}\label{eq:curvature}
 |K(x)|\lesssim d_F(x)^2,\qquad
 |\nabla K(x)|\lesssim d_F(x).
\end{equation}
These estimates hold globally by compactness.

Put \(H=[V,X]\). Thus \(H\) moves the base point in the direction \(Jv\),
parallel transporting \(v\), where \(J\) denotes rotation by \(\pi/2\).
With the conventions of Guillemin--Kazhdan \cite{GK},
\begin{equation}\label{eq:brackets}
 [V,X]=H,\qquad [V,H]=-X,\qquad [X,H]=KV.
\end{equation}
The three vector fields preserve Liouville measure. Define the
Guillemin--Kazhdan operators
\begin{equation}\label{eq:eta}
 \eta_+=\tfrac12(X-iH),\qquad
 \eta_-=\tfrac12(X+iH).
\end{equation}
Then \(X=\eta_++\eta_-\), \(\eta_\pm:\Omega_k\to\Omega_{k\pm1}\),
and \(\eta_+^*=-\eta_-\).
The degree-\((2j+1)\) component of \eqref{eq:invariant} is
\begin{equation}\label{eq:recurrence}
 \eta_+f_{2j}+\eta_-f_{2j+2}=0
 \quad\text{in distributions},\qquad j\ge0.
\end{equation}
Such Fourier recurrences are used by Guillemin--Kazhdan \cite{GKcohomology},  Paternain--Salo--Uhlmann
\cite[Sections~2 and~5]{PSU} and, for invariant functions in
nonpositive curvature, by Lefeuvre \cite[Section~4.2]{Lefeuvre}.

From \eqref{eq:brackets},
\([\eta_+,\eta_-]=\tfrac i2KV\). Integration by parts gives the
Guillemin--Kazhdan energy identity \cite{GK}:
\begin{equation}\label{eq:GK}
 \norm{\eta_+u}^{\,2}-\norm{\eta_-u}^{\,2}
 =-\frac{k}{2}\int_{SM}K|u|^2\dd m,\qquad u\in H^1\cap\Omega_k.
\end{equation}
In particular,
\begin{equation}\label{eq:monotone}
 \norm{\eta_+u}\ge\norm{\eta_-u}\qquad(k\ge0).
\end{equation}
The \(H^1\) statement follows from the identity on differentiable
sections by density.

Consider
\begin{equation}\label{eq:L}
 L=-(X^2+H^2)=-2(\eta_+\eta_-+\eta_-\eta_+).
\end{equation}
The bracket identities give
\[
 [V,X^2]=HX+XH,\qquad [V,H^2]=-XH-HX,
\]
so \(L\) preserves every \(\Omega_k\). Let \(L_k\) be the restriction of \(L\) to \(\Omega_k\).
For an oriented orthonormal frame \((e_1,e_2)\), write
\(\nabla e_1=\omega\otimes e_2\). Horizontal differentiation of
\(u=a(x)e^{ik\theta}\) is represented by
\[
 \nabla_k a=da-ik\omega a.
\]
Since \(v,Jv\) form an orthonormal basis of \(T_xM\),
\begin{equation}\label{eq:energy}
 |\nabla_k a|^2=|Xu|^2+|Hu|^2,\qquad
 \ip{L_ku}{u}=\norm{\nabla_k u}^{\,2}
 =2\norm{\eta_+u}^{\,2}+2\norm{\eta_-u}^{\,2}.
\end{equation}
Thus \(L_k=\nabla_k^*\nabla_k\) is elliptic. We use its nonnegative self-adjoint
realization defined by the closed quadratic form
\(\norm{\nabla_k u}^{\,2}\), and set
\[
 P_t^k=e^{-tL_k}\qquad(t\ge0).
\]
These are contractions on \(\Omega_k\), preserve the Fourier degree,
and converge strongly to the identity as \(t\downarrow0\).
Their construction uses the spectral theorem for nonnegative
self-adjoint operators.

In an orthonormal eigenbasis $(\phi_\ell)_\ell$ the expression of $P^k_t f_k$ is

$$P^k_tf_k=\sum_\ell e^{-t\lambda_\ell} \langle f_k,\phi_\ell\rangle\phi_\ell,$$
where $f_k=\sum_\ell \langle f_k,\phi_\ell\rangle \phi_\ell$ and the eigenvalues $\lambda_\ell$ are nonnegative. For later use, the same spectral theorem and
\(\sup_{\lambda\ge0}\lambda e^{-2t\lambda}\lesssim t^{-1}\) give
\begin{equation}\label{eq:spectral}
 \norm{\nabla_kP_t^kf_k}^{\,2}
 =\ip{L_kP_t^kf_k}{P_t^kf_k}\lesssim t^{-1}\norm{f_k}^{\,2},
 \qquad
 \norm{\eta_\pm P_t^kf_k}\lesssim t^{-1/2}\norm{f_k}.
\end{equation}
Both bounds are uniform in \(k\). In particular \(P_t^kf_k\) belongs to
$H^1(SM)\cap\Omega_k$ for \(t>0\), so \eqref{eq:GK} applies to it.

\section{Weighted heat estimates}

We prove the two estimates that will be used with \eqref{eq:curvature}. We will use the wave equation for the same connection Laplacian:
\begin{equation}\label{eq:wave}
 w_{rr}+L_kw=0,\qquad w(0)=0,\qquad w_r(0)=f_k.
\end{equation}
By spectral calculus,
\[
 w(r)=\frac{\sin(r\sqrt{L_k})}{\sqrt{L_k}}f_k,\qquad
 z(r):=w_r(r)=\cos(r\sqrt{L_k})f_k.
\]
At the zero eigenvalue the first multiplier is interpreted as \(r\).
Energy conservation, or directly the identity
\(\cos^2(r\sqrt\lambda)+\sin^2(r\sqrt\lambda)=1\), gives
\begin{equation}\label{eq:waveenergy}
 \norm{w_r(r)}^{\,2}+\norm{\nabla_kw(r)}^{\,2}=\norm{f_k}^{\,2}.
\end{equation}
\begin{lemma}
For every \(r\ge0\),
\begin{equation}\label{eq:propagation}
 \supp w(r),\quad \supp z(r),\quad \supp\nabla_k w(r)
 \subset F_r:=\{x:d_F(x)\le r\}.
\end{equation}
\end{lemma}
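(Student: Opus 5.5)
Finite propagation speed for the wave equation \eqref{eq:wave}, proved by a local energy estimate. Since $L_k=\nabla_k^*\nabla_k$ is a connection Laplacian on sections of a line bundle over $M$ with principal symbol $|\xi|^2_g$, the propagation speed is $1$. The data $w(0)=0$, $w_r(0)=f_k$ vanish outside $F$, because every $f_k$ is supported in $F$ by \eqref{eq:F}. Hence the solution should vanish at time $r$ outside the $r$-neighbourhood $F_r$. Once $w(r)$ vanishes on the open set $\{d_F>r\}$, so do $\nabla_k w(r)$ and $z(r)=w_r(r)$: on that set $w(\cdot)$ vanishes for all nearby times, since $F_s\subset F_r$ for $s\le r$. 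So it suffices to show $w(r)=0$ a.e. on $\{d_F>r\}$ for each fixed $r$.

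\textbf{Step 1: energy estimate on shrinking balls.} Fix $x_0$ with $d_F(x_0)=\rho>r$, so the ball $B(x_0,\rho)$ does not meet $F$. For $0\le s\le r$ set
\[
E(s)=\tfrac12\int_{B(x_0,\rho-s)}\bigl(|w_r(s)|^2+|\nabla_k w(s)|^2\bigr)\,d\bar a .
\]
Then $E(0)=\tfrac12\int_{B(x_0,\rho)}|f_k|^2\,d\bar a=0$. Formally,
\[
E'(s)=\operatorname{Re}\int_{B_{\rho-s}}\bigl(\bar w_r w_{rr}+\langle\nabla_k w_r,\nabla_k w\rangle\bigr)-\tfrac12\int_{\partial B_{\rho-s}}\bigl(|w_r|^2+|\nabla_k w|^2\bigr).
\]
Substituting $w_{rr}=-\nabla_k^*\nabla_k w$ and integrating by parts with the metric connection $\nabla_k$ leaves the boundary term $\operatorname{Re}\int_{\partial B}\bar w_r\,(\nabla_k w)(\nu)$. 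Its modulus is at most $\tfrac12\int_{\partial B}(|w_r|^2+|\nabla_k w|^2)$, using $|\nabla_k w(\nu)|\le|\nabla_k w|$ with $|\nu|=1$. Hence $E'\le0$, so $E\equiv0$ and $w_r=0$ on $B(x_0,\rho-s)$. Integrating in $s$ from $w(0)=0$ gives $w(r)=0$ on $B(x_0,\rho-r)\ni x_0$.

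\textbf{Step 2: regularity justification (main obstacle).} The solution is only defined spectrally, with $f_k\in L^2$ only, so the calculation above must be justified. I will first treat smooth data, or data in the domain of high powers of $L_k$, where $w\in C^2([0,r];H^1)$ and the computation holds classically. For small $\rho$ the smoothness of the balls follows from working below the injectivity radius. Large $\rho$ is handled by covering $\{d_F>r\}$ with small balls and iterating in time steps.

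Alternatively, and more cleanly, I would replace the balls with a Lipschitz weight. Let $\phi(x,s)=\psi(d_F(x)-s)$ with $\psi$ smooth, nondecreasing, and $\psi=0$ on $(-\infty,\epsilon]$, and use $\int\phi\,(|w_r|^2+|\nabla_k w|^2)$. Since $|\nabla d_F|\le1$, the same Cauchy–Schwarz argument gives monotonicity with no boundary terms. This requires only the $H^1$ energy identity \eqref{eq:waveenergy} and the weak form of \eqref{eq:wave}.

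\textbf{Step 3: approximation.} For general $f_k$ I will approximate by $f_k^{(n)}=P^k_{1/n}(f_k\chi_n)$, or better by spectral truncations. The difficulty is that these approximants do not keep support in $F$. I would therefore apply the weighted estimate with $\epsilon>0$ to approximants supported in $F_\epsilon$: mollify in $x$ with radius $<\epsilon$ in local frames, using a partition of unity, which keeps support in $F_\epsilon$ and converges in $L^2$. The $L^2$ continuity of the solution map from \eqref{eq:waveenergy} then passes the limit, giving support in $F_{r+\epsilon}$ for every $\epsilon>0$, hence in $F_r$ since $F_r$ is closed.
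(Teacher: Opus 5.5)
Your proposal is correct, and in its Lipschitz-weight form it is essentially the paper's argument: the paper uses the weight $\max\{d_F-r,0\}$, obtains $E'\le0$ for smooth data from $|\nabla d_F|\le1$ and Cauchy--Schwarz, and passes to $L^2$ data via the wave energy identity \eqref{eq:waveenergy}. The one simplification you missed is that $E(r)\le E(0)$ holds for smooth data with no support condition, and both sides are continuous in the $L^2$ datum, so the paper passes to the limit in the inequality itself and only afterwards uses $\supp f_k\subset F$ to get $E(0)=0$; your $\epsilon$-offset and support-preserving mollification are therefore unnecessary, though harmless.
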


\begin{proof}
The distance function \(d_F\) is Lipschitz, with
\(|\nabla d_F|\le1\) almost everywhere. Set
\[
 E(r)=\frac12\int_M\max\{d_F-r,0\}
       \bigl(|w_r|^2+|\nabla_k w|^2\bigr)\,d\bar a.
\]
First take smooth initial data, without imposing a support
condition. The spectral formula gives
\(w_{rr},\nabla_k w_r\in L^2\), so \(E\) is absolutely
continuous. Differentiating and using
\(w_{rr}=-\nabla_k^*\nabla_k w\), integration by parts and
the product rule give, for almost every \(r\),
\begin{align*}
 E'(r)
 ={}&-\frac12\int_{\{d_F>r\}}
       \bigl(|w_r|^2+|\nabla_k w|^2\bigr)\,d\bar a\\
 &-\operatorname{Re}\int_{\{d_F>r\}}
       \langle\nabla_k w(\nabla d_F),w_r\rangle\,d\bar a\\
 \le{}&-\frac12\int_{\{d_F>r\}}
       \bigl(|w_r|-|\nabla_k w|\bigr)^2\,d\bar a
 \le0.
\end{align*}
Here the Lipschitz weight times \(w_r\) is an admissible
\(H^1\) test section, and the inequality uses
\(|\nabla d_F|\le1\). Thus \(E(r)\le E(0)\) for every
\(r\ge0\).

Now choose smooth \(f_{k,N}\in\Omega_k\) converging to
\(f_k\) in \(L^2\), and let \(E_N\) be the corresponding
weighted energy, with the same set \(F\).
By \eqref{eq:waveenergy} applied to differences of solutions
and Cauchy--Schwarz,
\[
 |E_N(r)-E(r)|
 \lesssim
 \bigl(\norm{f_{k,N}}+\norm{f_k}\bigr)
 \norm{f_{k,N}-f_k}
 \longrightarrow0.
\]
Passing to the limit in \(E_N(r)\le E_N(0)\) gives
\(E(r)\le E(0)\). No support condition on the approximants
is needed.

Since the original datum is supported in \(F\),
\[
 E(0)=\frac12\int_M d_F|f_k|^2\,d\bar a=0.
\]
Hence \(E(r)=0\), so \(z(r)=w_r(r)\) and
\(\nabla_k w(r)\) vanish almost everywhere on
\(\{d_F>r\}\).
Finally, \(w(r)=\int_0^r z(s)\,ds\) in \(L^2\), and
\(F_s\subset F_r\) for \(0\le s\le r\), so \(w(r)\)
also vanishes outside \(F_r\).
\end{proof}

\begin{proposition}\label{lem:weighted}
If \(f_k\in \Omega_k\) is supported in \(F\), then, for every \(t>0\),
\begin{equation}\label{eq:weighted}
 \norm{d_FP_t^kf_k}\lesssim t^{1/2}\norm{f_k},\qquad
 \norm{d_F^2\nabla_kP_t^kf_k}\lesssim t^{1/2}\norm{f_k}.
\end{equation}
The constants are independent of \(k\).
\end{proposition}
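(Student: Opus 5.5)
The plan is to express the heat semigroup through the wave propagator of \eqref{eq:wave}, using the transmutation (subordination) formula, and then to exploit finite propagation speed \eqref{eq:propagation}. Let
\[
 p_t(r)=(4\pi t)^{-1/2}e^{-r^2/4t},\qquad r\in\R .
\]
For every \(\lambda\ge0\) one has the scalar identity \(\int_\R p_t(r)\cos(r\sqrt\lambda)\,dr=e^{-t\lambda}\). By the spectral theorem for \(L_k\), applied eigenvalue by eigenvalue in the expansion of \(P_t^kf_k\), this gives
\[
 P_t^kf_k=\int_\R p_t(r)\,z(r)\,dr
\]
as an \(L^2\)-valued Bochner integral. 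Here \(z(r)=\cos(r\sqrt{L_k})f_k\) is extended evenly to \(r<0\), and \(\norm{z(r)}\le\norm{f_k}\) by \eqref{eq:waveenergy}. I would check this first, since every later manipulation rests on it. The only points are measurability and absolute convergence, which follow from the uniform bound on \(\norm{z(r)}\) and the integrability of \(p_t\).

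For the first estimate, the lemma gives \(\supp z(r)\subset F_{|r|}\), so \(|d_F z(r)|\le |r|\,|z(r)|\) pointwise. The evenness in \(r\) is harmless because \(z(-r)=z(r)\). Multiplying the subordination formula by \(d_F\) and taking norms gives
\[
 \norm{d_FP_t^kf_k}\le\int_\R p_t(r)\,|r|\,\norm{z(r)}\,dr\le\norm{f_k}\int_\R |r|\,p_t(r)\,dr\lesssim t^{1/2}\norm{f_k},
\]
with a constant independent of \(k\).

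For the second estimate I cannot differentiate \(z(r)\) under the integral, because \(\nabla_k z(r)\) is not controlled by \(\norm{f_k}\). Instead I would integrate by parts in \(r\), using \(z=w_r\), where \(w(r)=\sin(r\sqrt{L_k})L_k^{-1/2}f_k\) is odd in \(r\). This gives
\[
 P_t^kf_k=-\int_\R p_t'(r)\,w(r)\,dr .
\]
The boundary terms vanish because \(\norm{w(r)}\le|r|\,\norm{f_k}\) while \(p_t\) decays like a Gaussian. Equivalently, the identity can be verified directly on each eigenvalue by spectral calculus, which I would do to avoid justifying the integration by parts in \(L^2\).

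Next I would apply \(\nabla_k\) inside the integral. This is legitimate because \(\norm{\nabla_k w(r)}\le\norm{f_k}\) by \eqref{eq:waveenergy} and \(\nabla_k\) is closed, so it commutes with the Bochner integral. By the lemma, \(\supp\nabla_k w(r)\subset F_{|r|}\), hence \(d_F^2|\nabla_k w(r)|\le r^2|\nabla_k w(r)|\). Using \(p_t'(r)=-\tfrac{r}{2t}p_t(r)\),
\[
 \norm{d_F^2\nabla_kP_t^kf_k}\le\norm{f_k}\int_\R \frac{|r|^3}{2t}\,p_t(r)\,dr\lesssim \frac{t^{3/2}}{t}\norm{f_k}=t^{1/2}\norm{f_k}.
\]
The constant is again independent of \(k\).

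The main obstacle is the rigorous handling of these vector-valued integrals. The first step is the subordination formula itself, including the zero eigenvalue, where \(w(r)=rf\). The second is interchanging \(\nabla_k\) with the integral. Both reduce to scalar identities for each eigenvalue together with the uniform energy bounds \eqref{eq:waveenergy}, and that is how I would argue them.
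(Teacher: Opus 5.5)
Your proposal is correct and follows the paper's proof. You use Kannai's heat--wave formula (your version over $\R$ with $p_t$ is the paper's half-line formula rewritten), finite propagation speed for the first bound, and integration by parts in wave time to pass from $z$ to $w$, so that the energy bound and the support of $\nabla_k w(r)$ give the second bound. Your justification of the Bochner integrals and of commuting the closed operator $\nabla_k$ with the integral corresponds to the paper's remark that the integral converges in the form domain.
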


\begin{proof}

The heat--wave formula of Kannai \cite{Kannai} is
\begin{equation}\label{eq:HWT}
 P_t^kf_k=\frac1{\sqrt{\pi t}}
       \int_0^\infty e^{-r^2/(4t)}z(r)\dd r,
\end{equation}
which follows directly from the scalar Gaussian cosine transform
\[
 \frac1{\sqrt{\pi t}}\int_0^\infty
       e^{-r^2/(4t)}\cos(r\sqrt\lambda)\dd r=e^{-t\lambda}
 \qquad(\lambda\ge0)
\]
and the spectral theorem.
On the support of \(z(r)\), \(d_F\le r\); thus
\eqref{eq:waveenergy} and \eqref{eq:HWT} imply
\[
 \norm{d_FP_t^kf_k}
 \le\frac{\norm{f_k}}{\sqrt{\pi t}}
      \int_0^\infty r e^{-r^2/(4t)}\dd r
 \lesssim t^{1/2}\norm{f_k}.
\]

For the derivative estimate, integrate \eqref{eq:HWT} by parts in
wave time:
\begin{equation}\label{eq:HWTsine}
 P_t^kf_k=\frac1{\sqrt{\pi t}}
       \int_0^\infty\frac r{2t}e^{-r^2/(4t)}w(r)\dd r.
\end{equation}
The boundary terms vanish, since \(w(0)=0\) and
\(\norm{w(r)}\le r\norm{f_k}\).
By \eqref{eq:waveenergy} and \eqref{eq:propagation},
\[
 \norm{d_F^2\nabla_kw(r)}\le r^2\norm{f_k}.
\]
The integral in \eqref{eq:HWTsine} converges in the form domain;
taking its weak covariant derivative therefore gives
\[
 \norm{d_F^2\nabla_kP_t^kf_k}
 \le\frac{\norm{f_k}}{\sqrt{\pi t}}
       \int_0^\infty\frac{r^3}{2t}e^{-r^2/(4t)}\dd r
 \lesssim t^{1/2}\norm{f_k}.
\]
All constants came from scalar Gaussian integrals and the wave energy,
so they are independent of \(k\).
\end{proof}

\section{The heat commutator}

For an operator mapping degree \(k\) to degree \(k\pm1\), use the notation
\[
 [L,\eta_\pm]\big|_{\Omega_k}
   =L_{k\pm1}\eta_\pm-\eta_\pm L_k,\qquad
 [\eta_\pm,P_t]\big|_{\Omega_k}
   =\eta_\pm P_t^k-P_t^{k\pm1}\eta_\pm.
\]

\begin{proposition}\label{prop:commutator}
If \(f_k\in \Omega_k\) is supported in \(F\), then
\begin{equation}\label{eq:commutator}
 \norm{\eta_\pm P_t^kf_k-P_t^{k\pm1}\eta_\pm f_k}
 \lesssim(1+|k|)t^{3/2}\norm{f_k},\qquad t>0.
\end{equation}
Here \(\eta_\pm f_k\) is understood distributionally.
\end{proposition}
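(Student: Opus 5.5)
The plan is a Duhamel argument for the commutator, with the curvature commutator \([L,\eta_\pm]\) controlled by \eqref{eq:curvature} and the weighted bounds of Proposition~\ref{lem:weighted}. Formally,
\[
 \eta_\pm P_t^kf_k-P_t^{k\pm1}\eta_\pm f_k
 =\int_0^t\frac{d}{ds}\Bigl(P_{t-s}^{k\pm1}\eta_\pm P_s^kf_k\Bigr)\dd s
 =-\int_0^tP_{t-s}^{k\pm1}\,[L,\eta_\pm]\,P_s^kf_k\dd s .
\]
Since each \(P_{t-s}^{k\pm1}\) is a contraction, it then suffices to prove
\[
 \norm{[L,\eta_\pm]P_s^kf_k}\lesssim(1+|k|)\,s^{1/2}\norm{f_k}.
\]
Integrating \(s^{1/2}\) over \([0,t]\) gives the factor \(t^{3/2}\) in \eqref{eq:commutator}.

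\emph{Step 1: the commutator.} Using \eqref{eq:brackets} and \(L=-(X^2+H^2)\), compute
\[
 [X^2,H]=X(KV)+KVX,\qquad [H^2,X]=-H(KV)-KVH,
\]
and the analogous expressions \([X^2,X]=0=[H^2,H]\). Expanding \(X(KV)=(XK)V+KXV\), and similarly for \(H\), shows that on \(\Omega_k\)
\[
 [L,\eta_\pm]u=\text{(terms } K\cdot(\text{const}\cdot k\text{ or }k\pm1)\cdot(Xu\text{ or }Hu))
 +\text{(terms }(XK\text{ or }HK)\cdot(\text{const}\cdot k)\,u).
\]
Here \(V\) acts as \(ik\) on \(\Omega_k\) and as \(i(k\pm1)\) after \(X\) or \(H\). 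So, pointwise,
\[
 \bigl|[L,\eta_\pm]u\bigr|\lesssim(1+|k|)\bigl(|K|\,|\nabla_ku|+|\nabla K|\,|u|\bigr),
\]
using \eqref{eq:energy} to bound \(|Xu|+|Hu|\) by \(|\nabla_k u|\).

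\emph{Step 2: the weighted bound.} Insert \eqref{eq:curvature}, which gives \(|K|\lesssim d_F^2\) and \(|\nabla K|\lesssim d_F\). Then
\[
 \norm{[L,\eta_\pm]P_s^kf_k}\lesssim(1+|k|)\bigl(\norm{d_F^2\nabla_kP_s^kf_k}+\norm{d_FP_s^kf_k}\bigr).
\]
Both terms on the right are \(\lesssim s^{1/2}\norm{f_k}\) by \eqref{eq:weighted}, with constants independent of \(k\). This is precisely why Proposition~\ref{lem:weighted} was proved with the weights \(d_F\) and \(d_F^2\).

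\emph{Step 3: justification (main obstacle).} The formal Duhamel identity needs care, because \(f_k\) is only \(L^2\) and \(\eta_\pm f_k\) is a distribution. I would argue weakly. Fix a smooth test section \(\varphi\in\Omega_{k\pm1}\) and set
\[
 G(s)=\ip{\eta_\pm P_s^kf_k}{P_{t-s}^{k\pm1}\varphi},\qquad 0<s\le t .
\]
First I would approximate \(f_k\) by smooth \(f_{k,N}\); ellipticity of \(L_k\) makes \(P_s^kf_{k,N}\) smooth enough to differentiate. Differentiating \(G\) in \(s\) and using \(\eta_\pm^*=-\eta_\mp\) together with self-adjointness of \(L\) produces \(\ip{[L,\eta_\pm]P_s^kf_{k,N}}{P_{t-s}^{k\pm1}\varphi}\). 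The \(s\)-integral is then bounded as in Steps 1–2.

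One subtlety is that Proposition~\ref{lem:weighted} uses \(\supp f_k\subset F\), which the approximants need not satisfy. I would therefore keep \(f_k\) itself on the right-hand side. The bound of Step 2 is integrable near \(s=0\), and the only limits needed are \(s\downarrow0\) and \(N\to\infty\) in the boundary term. As \(s\downarrow0\), strong continuity of \(P_s\) gives
\[
 G(s)=-\ip{P_s^kf_k}{\eta_\mp P^{k\pm1}_{t-s}\varphi}\to\ip{P_t^{k\pm1}\eta_\pm f_k}{\varphi},
\]
interpreted distributionally. At \(s=t\), \(G(t)=\ip{\eta_\pm P_t^kf_k}{\varphi}\), which is meaningful since \(P_t^kf_k\in H^1\) by \eqref{eq:spectral}. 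Taking the supremum over \(\norm\varphi\le1\) then yields \eqref{eq:commutator}.
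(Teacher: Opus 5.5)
Your proposal is correct and follows the paper's route. It uses the explicit first-order commutator $[L,\eta_\pm]$ with coefficients $K$, $\nabla K$ and factors $O(1+|k|)$, then the Taylor bounds \eqref{eq:curvature} with Proposition~\ref{lem:weighted} to get $\|[L,\eta_\pm]P_s^kf_k\|\lesssim(1+|k|)s^{1/2}\|f_k\|$, and finally Duhamel plus $L^2$ contraction. There is only a harmless sign slip: since $\frac{d}{ds}\bigl(P_{t-s}^{k\pm1}\eta_\pm P_s^k\bigr)=P_{t-s}^{k\pm1}[L,\eta_\pm]P_s^k$, the Duhamel integral carries a plus sign (as your own computation of $G'$ in Step~3 in fact gives), and that weak-form justification is more careful than the paper, which simply asserts \eqref{eq:Duhamel}.
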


\begin{proof}
The Guillemin--Kazhdan relations \eqref{eq:brackets}--\eqref{eq:L}
give, on degree \(k\),
\begin{align}
 [L,\eta_+]u
   &=-(2k+1)K\eta_+u-k(\eta_+K)u,\label{eq:commplus}\\
 [L,\eta_-]u
   &=(2k-1)K\eta_-u+k(\eta_-K)u.\label{eq:commminus}
\end{align}
Indeed, for \(u\in\Omega_k\), we have \(Vu=iku\). Since
\(\eta_\pm u\in\Omega_{k\pm1}\),
\[
 V(\eta_+u)=i(k+1)\eta_+u,\qquad
 V(\eta_-u)=i(k-1)\eta_-u.
\]
Using \(L=-2(\eta_+\eta_-+\eta_-\eta_+)\) and the
Guillemin--Kazhdan relation
\([\eta_+,\eta_-]=\tfrac i2KV\), the product rule for
commutators gives
\begin{align*}
 [L,\eta_+]u
 &=i\eta_+(KVu)+iKV(\eta_+u)\notag\\
 &=-k\,\eta_+(Ku)-(k+1)K\eta_+u\notag\\
 &=-(2k+1)K\eta_+u-k(\eta_+K)u.
\end{align*}
Likewise,
\begin{align*}
 [L,\eta_-]u
 &=-i\eta_-(KVu)-iKV(\eta_-u)\notag\\
 &=k\,\eta_-(Ku)+(k-1)K\eta_-u\notag\\
 &=(2k-1)K\eta_-u+k(\eta_-K)u.
\end{align*}
In each calculation, the last equality uses the product rule
for \(\eta_\pm(Ku)\). This computation makes explicit
that the commutator is first order and involves only \(K\) and
its first derivatives. In particular,
\begin{equation}\label{eq:commbound}
 \norm{[L,\eta_\pm]u}
 \lesssim(1+|k|)
   \bigl(\norm{K\nabla_ku}+\norm{(\nabla K)u}\bigr).
\end{equation}
Although its defining expression has higher derivatives, the
right-hand side of \eqref{eq:commplus} or \eqref{eq:commminus}
defines the commutator on the form domain.

Apply Proposition~\ref{lem:weighted} and the Taylor estimates
\eqref{eq:curvature}. They give separately
\begin{align*}
 \norm{K\nabla_kP_\tau^kf_k}
   &\lesssim\norm{d_F^2\nabla_kP_\tau^kf_k}
    \lesssim\tau^{1/2}\norm{f_k},\\
 \norm{(\nabla K)P_\tau^kf_k}
   &\lesssim\norm{d_FP_\tau^kf_k}
    \lesssim\tau^{1/2}\norm{f_k}.
\end{align*}
Consequently,
\begin{equation}\label{eq:forcing}
 \norm{[L,\eta_\pm]P_\tau^kf_k}
 \lesssim(1+|k|)\tau^{1/2}\norm{f_k}.
\end{equation}

Duhamel's formula gives
\begin{equation}\label{eq:Duhamel}
 \eta_\pm P_t^kf_k-P_t^{k\pm1}\eta_\pm f_k
 =\int_0^t P_{t-\tau}^{k\pm1}
          [L,\eta_\pm]P_\tau^kf_k\dd\tau.
\end{equation}
The integral belongs to \(L^2\) by \eqref{eq:forcing}
and the \(L^2\) contraction of the heat operators.

Finally, the \(L^2\) contraction of \(P_{t-\tau}^{k\pm1}\) and
\eqref{eq:forcing} imply
\[
 \norm{[\eta_\pm,P_t]f_k}
 \le\int_0^t\norm{[L,\eta_\pm]P_\tau^kf_k}\dd\tau
 \lesssim(1+|k|)\norm{f_k}\int_0^t\tau^{1/2}\dd\tau
 \lesssim(1+|k|)t^{3/2}\norm{f_k}.
\]
\end{proof}

\section{Completion of the proof}

Put \(u_k(t)=P_t^kf_k\).
First, there are \(c>0\) and \(t_0>0\) such that
\begin{equation}\label{eq:initial}
 \norm{\eta_+u_0(t)}\ge c\qquad(0<t\le t_0).
\end{equation}
Otherwise some \(t_n\downarrow0\) would satisfy
\(\eta_+P_{t_n}^0f_0\to0\) in \(L^2\).
Since \(P_{t_n}^0f_0\to f_0\) in \(L^2\), this would imply
\(\eta_+f_0=0\) distributionally. The function \(f_0\) is real,
so \eqref{eq:eta} gives \(Xf_0=Hf_0=0\).
These two horizontal derivatives determine the base differential
of a degree-zero function; hence \(df_0=0\).
On the connected surface \(M\), a function with vanishing
distributional differential is constant almost everywhere,
contrary to the construction of \(f_0\).
This proves \eqref{eq:initial}.

Heat smoothing need not preserve the flow invariance \(Xf=0\).
The resulting error is \(XP_tf=[X,P_t]f\), where
\(P_tf=\sum_k u_k(t)\). We denote its degree-\((2\ell+1)\)
Fourier component by
\[
 r_\ell(t)
 :=\bigl([X,P_t]f\bigr)_{2\ell+1}
 =\eta_+u_{2\ell}(t)+\eta_-u_{2\ell+2}(t).
\]
Thus \(r_\ell(t)\) measures the failure of the recurrence
\eqref{eq:recurrence} after smoothing. That recurrence gives
\[
 r_\ell(t)
 =[\eta_+,P_t]f_{2\ell}
  +[\eta_-,P_t]f_{2\ell+2},
\]
so Proposition~\ref{prop:commutator} bounds precisely this error.

Since every Fourier component \(f_k\) is supported in \(F\),
Proposition~\ref{prop:commutator} gives
\begin{equation}\label{eq:error}
 \norm{r_\ell(t)}
 \lesssim t^{3/2}(\ell+1)
          \bigl(\norm{f_{2\ell}}+\norm{f_{2\ell+2}}\bigr).
\end{equation}
The Guillemin--Kazhdan inequality \eqref{eq:monotone} and the
triangle inequality imply
\[
 \norm{\eta_+u_{2\ell+2}(t)}
 \ge\norm{\eta_-u_{2\ell+2}(t)}
 \ge\norm{\eta_+u_{2\ell}(t)}-\norm{r_\ell(t)}.
\]
Fix \(t>0\). Applying this inequality successively for
\(\ell=0,\ldots,j-1\), we obtain
\begin{equation}\label{eq:chain}
 \norm{\eta_+u_{2j}(t)}
 \ge\norm{\eta_+u_0(t)}
       -\sum_{\ell=0}^{j-1}\norm{r_\ell(t)}.
\end{equation}

By the Cauchy--Schwarz inequality and Parseval's identity
\eqref{eq:parseval}, \eqref{eq:error} gives
\begin{align}
 \sum_{\ell=0}^{j-1}\norm{r_\ell(t)}
 &\lesssim t^{3/2}
   \left(\sum_{\ell=0}^{j-1}(\ell+1)^2\right)^{1/2}
   \left(\sum_{\ell=0}^{j-1}
    \bigl(\norm{f_{2\ell}}+\norm{f_{2\ell+2}}\bigr)^2\right)^{1/2}
 \notag\\
 &\lesssim(tj)^{3/2}\norm f.\label{eq:sum}
\end{align}
Choose one \(a>0\), with \(a\le t_0\), sufficiently small that
the last bound at \(t=a/j\) is at most \(c/2\), for every \(j\ge1\).
Combining \eqref{eq:initial}, \eqref{eq:chain},
\eqref{eq:sum}, and \eqref{eq:spectral} yields
\[
 \frac c2\le\norm{\eta_+P_{a/j}^{2j}f_{2j}}
 \lesssim \left(\frac ja\right)^{1/2}\norm{f_{2j}}.
\]

Consequently \(\norm{f_{2j}}^{\,2}\gtrsim j^{-1}\), with a positive
constant independent of \(j\).
A final application of Parseval's identity gives the contradiction
\[
 \infty>\norm f^{\,2}
 \ge\sum_{j=1}^\infty\norm{f_{2j}}^{\,2}
 \gtrsim\sum_{j=1}^\infty\frac1j=\infty.
\]
Hence \(m(S)=0\). Pesin's theorem on the regular set \cite{Pesin} proves Theorem~\ref{thm:surface}.

\section*{AI use acknowledgement}

We used ChatGPT (OpenAI) to assist with mathematical arguments,
exposition, and reference searches. The authors take full
responsibility for the content of this paper.

\end{document}